\newtheorem{theorem}{Theorem}[section]
\newtheorem{corollary}[theorem]{Corollary}
\newtheorem{lemma}[theorem]{Lemma}
\newtheorem{proposition}[theorem]{Proposition}
\theoremstyle{definition}
\newtheorem{definition}[theorem]{Definition}
\theoremstyle{remark}
\newtheorem{remark}[theorem]{Remark}
\numberwithin{equation}{section}
\begin{document}

\title[] 
 {On  comass  and  stable systolic inequalities}

\author{James J. Hebda }
\address{Department of Mathematics and Statistics, Saint Louis University, St. Louis, MO 63103}
\email{james.hebda@slu.edu} 

 %   author two information
 
 \author{Mikhail G. Katz}
\address{ Department of Mathematics, Bar Ilan University, Ramat Gan 52900 Israel}
\email{katzmik@math.biu.ac.il}

%\subjclass[2010]{Primary: 53C20; Secondary: 53C22}

%\date{}                                           % Activate to display a given date or no date

%\keywords{ }

\maketitle

\begin{abstract}
We study the maximum ratio  of the Euclidean norm to the comass norm of $p$-covectors in Euclidean $n$-space  and improve the known upper bound found  in the standard references by Whitney and Federer.  We go on to prove stable systolic inequalities when the fundamental cohomology class of the manifold is a cup product of forms of lower degree.  

\end{abstract}

\section{Introduction}%1

 In the first part of this paper we investigate the maximal ratio $C_{n,p}$  of the Euclidean norm to the comass norm of $p$-covectors in $\mathbf{R}^n$. 
 According to the standard references Federer \cite[1.81]{HF} and Whitney \cite[equation (18) p. 50]{HW},  
\begin{equation*}
 C_{n,p} \leq \binom{n}{p}^\frac 12.
\end{equation*}
In Propositions \ref{p:i1} and \ref{p:i2}, we establish 
 inequalities among the $C_{n,p}$  which lead to an improved upper bound on $C_{n,p}$. 
 As far as we are aware, these two propositions haven't appeared in the earlier literature.
 In Corollary \ref{c:w2}, we obtain an upper bound on the comass of the wedge product of two covectors that improves the upper bound found in  \cite[p. 39]{HF} and \cite[p. 55]{HW}. One of the main results of Goodwillie et al., \cite{GHK}, is an upper bound on the Euclidean  norm of a product of an arbitrary number of $2$-covectors in terms of the product of their comasses. In  Proposition \ref{p:wm}, we generalize this result for wedge products of an arbitrary number of $p$-covectors.
 
 In the last part  of this paper, using the methods of \cite{GHK}, we  apply these results to prove stable systolic inequalities when the fundamental cohomology class of the manifold is a cup product of forms of lower degree.  Background material, including other references, on stable systolic inequalities of this type can be found in \cite{MK}.

\section{Norms on $\Lambda^p(\mathbf{R}^n)^\ast$}%S2

Let $\Lambda^p(\mathbf{R}^n)$  be the vector space  of $p$-vectors in $\mathbf{R}^n$,  and let $\Lambda^p(\mathbf{R}^n)^\ast$  be that of $p$-covectors.  The Euclidean and mass norms of $\xi \in \Lambda^p(\mathbf{R}^n)$ are denoted $|\xi|$ and $\Vert\xi\Vert$ respectively, while the Euclidean and comass norms of $\phi\in \Lambda^p(\mathbf{R}^n)^\ast$ are denoted $|\phi|^\ast$ and $\Vert\phi\Vert^\ast$ respectively. Recall that by definition
\begin{equation*}
\Vert \phi \Vert^\ast = \sup\{\phi(\xi) : \xi\in\Lambda^p(\mathbf{R}^n), \xi \mathrm {\ is\ simple}, |\xi|\leq 1\}.
\end{equation*}
Further  information on mass and comass can be found in \cite[1.81]{HF} or \cite[Chapter I, \S13]{HW}.

\subsection{Norm comparison}%S2.1

 This section is concerned with the ratio of the  Euclidean norm to the comass norm.

\begin{definition}
\begin{equation*}
 C_{n,p} = \sup \left\{ \frac {|\phi|^\ast}{\Vert \phi \Vert ^\ast} : \phi \in \Lambda^p(\mathbf{R}^n)^\ast, \enspace \phi\neq 0\right\}. 
 \end{equation*}
\end{definition}

\begin{remark}\label{r:1} The following facts about $C_{n,p}$ are well--known \cite{HF,HW}:

\begin{enumerate}
\item Because the Hodge star operation preserves the Euclidean and comass norms, $C_{n,n-p} = C_{n,p}$.
\item Because 1-covectors and $(n-1)$-covectors are simple, $C_{n,1} = C_{n,n-1} =1$ and, because $2$-covectors can be put into a (diagonal) canonical form \cite[Theorem 5.1, p. 24]{SS} ,  $C^2_{n,2}=C^2_{n,n-2} = \lfloor \frac n2\rfloor $.
\item The  upper bound $C_{n,p}^2 \leq \binom{n}{p}$ always holds.
\end{enumerate}

\end{remark}

\begin{proposition}\label{p:i1}%2.3
Suppose $1<p<n-1$, then
\begin{equation*}
 C_{n,p}^2  \leq C_{n-1,p-1}^2 + C_{n-1,p}^2.
 \end{equation*}
\end{proposition}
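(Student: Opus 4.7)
The plan is to decompose any $p$-covector on $\mathbf{R}^n$ with respect to the orthogonal splitting $\mathbf{R}^n = \mathbf{R}^{n-1} \oplus \mathbf{R} e_n$, and to control each piece separately using $C_{n-1,p}$ and $C_{n-1,p-1}$.

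Concretely, given a nonzero $\phi \in \Lambda^p(\mathbf{R}^n)^\ast$, I would expand $\phi$ in the standard wedge basis of dual vectors and separate off the terms containing $e^n$, writing
\begin{equation*}
\phi = \alpha + \beta \wedge e^n,
\end{equation*}
where $\alpha \in \Lambda^p(\mathbf{R}^{n-1})^\ast$ and $\beta \in \Lambda^{p-1}(\mathbf{R}^{n-1})^\ast$, both viewed as elements of $\Lambda^\ast(\mathbf{R}^n)^\ast$ via the inclusion (note that $1<p<n-1$ guarantees both $\alpha$ and $\beta$ lie in nontrivial spaces). The basis $\{e^J:J\subset\{1,\ldots,n-1\},|J|=p\}$ is orthonormal and orthogonal to the orthonormal set $\{e^I\wedge e^n:I\subset\{1,\ldots,n-1\},|I|=p-1\}$, so the splitting is Euclidean-orthogonal and
\begin{equation*}
|\phi|^{\ast 2} = |\alpha|^{\ast 2} + |\beta|^{\ast 2}.
\end{equation*}

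The heart of the argument is to bound the comasses of $\alpha$ and $\beta$ by $\Vert\phi\Vert^\ast$. For $\alpha$: any simple unit $p$-vector $\xi$ lying in $\mathbf{R}^{n-1}$ is also simple and unit in $\mathbf{R}^n$, and $(\beta\wedge e^n)(\xi)=0$ because $e^n$ vanishes on $\mathbf{R}^{n-1}$; hence $\alpha(\xi)=\phi(\xi)\leq\Vert\phi\Vert^\ast$, so $\Vert\alpha\Vert^\ast\leq\Vert\phi\Vert^\ast$. For $\beta$: for any simple unit $(p-1)$-vector $\eta\in\Lambda^{p-1}(\mathbf{R}^{n-1})$, the wedge $\eta\wedge e_n$ is a simple unit $p$-vector in $\mathbf{R}^n$ (since $e_n\perp\mathbf{R}^{n-1}$). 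A direct coordinate calculation, using that $\alpha$ has no $e^n$-factor and that $(e^I\wedge e^n)(e_J\wedge e_n)=\delta^I_J$ for $I,J\subset\{1,\ldots,n-1\}$, gives $\alpha(\eta\wedge e_n)=0$ and $(\beta\wedge e^n)(\eta\wedge e_n)=\beta(\eta)$, so $\beta(\eta)=\phi(\eta\wedge e_n)\leq\Vert\phi\Vert^\ast$ and thus $\Vert\beta\Vert^\ast\leq\Vert\phi\Vert^\ast$.

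Combining these with the defining inequalities $|\alpha|^\ast\leq C_{n-1,p}\Vert\alpha\Vert^\ast$ and $|\beta|^\ast\leq C_{n-1,p-1}\Vert\beta\Vert^\ast$ yields
\begin{equation*}
|\phi|^{\ast 2} \leq C_{n-1,p}^2\Vert\alpha\Vert^{\ast 2} + C_{n-1,p-1}^2\Vert\beta\Vert^{\ast 2} \leq \bigl(C_{n-1,p}^2 + C_{n-1,p-1}^2\bigr)\Vert\phi\Vert^{\ast 2},
\end{equation*}
and taking the supremum over $\phi$ gives the claim. There is no serious obstacle: the only point requiring a touch of care is the shuffle-formula computation $(\beta\wedge e^n)(\eta\wedge e_n)=\beta(\eta)$, but this is standard. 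The conceptual content is that splitting off the $e^n$-factor is simultaneously Euclidean-orthogonal (hence additive on $|\cdot|^{\ast 2}$) and \emph{tested by} two families of simple $p$-vectors in $\mathbf{R}^n$, so that each summand's comass is individually dominated by $\Vert\phi\Vert^\ast$.
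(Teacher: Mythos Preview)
Your proof is correct and follows essentially the same approach as the paper's: decompose $\phi$ along $\mathbf{R}^n=\mathbf{R}^{n-1}\oplus\mathbf{R} e_n$, observe the splitting is Euclidean-orthogonal, and bound the comass of each piece by $\Vert\phi\Vert^\ast$ by testing against simple $p$-vectors in $\mathbf{R}^{n-1}$ and simple vectors of the form $\eta\wedge e_n$. The only cosmetic difference is notation (your $\alpha,\beta$ are the paper's $\chi,\psi$).
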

\begin{proof}
Let $\phi \in \Lambda^p(\mathbf{R}^n)^\ast$. Let $e_1,\dots,e_n$ be an orthonormal basis for $\mathbf{R}^n$ with dual basis $e_1^\ast,\dots,e_n^\ast$. We may write
\begin{equation*}
\phi = \psi\wedge e^\ast_n + \chi
\end{equation*}
where $\psi \in \Lambda^{p-1}(\mathbf{R}^{n-1})^\ast$ and $\chi \in \Lambda^p(\mathbf{R}^{n-1})^\ast$. There exist a  simple $(p-1)$--vector $\xi \in \Lambda^{p-1}(\mathbf{R}^{n-1}) \subset \Lambda^{p-1}(\mathbf{R}^n)$ and a simple $p$--vector $\xi^\prime \in \Lambda^{p} (\mathrm{R}^{n-1}) \subset \Lambda^{p} (\mathbf{R}^n)$, both of Euclidean norm 1, such that $\psi(\xi) = \Vert\psi\Vert^\ast$ and
$\chi(\xi^\prime) = \Vert \chi\Vert^\ast$. Then $\xi \wedge e_n$ is a simple $p$--vector  of Euclidean norm 1. Thus
\begin{equation*}
\Vert\phi\Vert^\ast \geq \phi(\xi \wedge e_n) = (\psi\wedge e^\ast_n)(\xi\wedge e_n) + \chi(\xi\wedge e_n)= \psi(\xi) + 0 = \Vert\psi\Vert^\ast 
\end{equation*}
and
\begin{equation*}
\Vert\phi\Vert^\ast \geq \phi(\xi^\prime) = (\psi\wedge e^\ast_n)(\xi^\prime) + \chi(\xi^\prime)=  0 +\chi(\xi^\prime) = \Vert\chi\Vert^\ast. 
\end{equation*}
But
\begin{equation*}
 |\psi|^{\ast} \leq C_{n-1,p-1} \Vert\psi\Vert^{\ast}\quad\mathrm{and}\quad |\chi|^{\ast} \leq C_{n-1,p} \Vert\chi\Vert^{\ast}.
 \end{equation*}
Hence
\begin{eqnarray*}
 |\phi|^{\ast 2} &=& |\psi\wedge e_n^\ast|^{\ast 2} + |\chi|^{\ast 2}=  |\psi|^{\ast 2} + |\chi|^{\ast 2}\\ 
 &\leq&  C_{n-1,p-1}^2 \Vert\psi\Vert^{\ast 2}+ C_{n-1,p}^2 \Vert\chi\Vert^{\ast 2}\\ 
 &\leq&  C_{n-1,p-1}^2 \Vert\phi\Vert^{\ast 2}+ C_{n-1,p}^2 \Vert\phi\Vert^{\ast 2}\\ 
 &=& (C_{n-1,p-1}^2+C_{n-1,p}^2) \Vert \phi\Vert^{\ast 2}.
 \end{eqnarray*}
 Therefore, since this is true for all $\phi$,
 $$ C_{n,p}^2  \leq C_{n-1,p-1}^2 + C_{n-1,p}^2.$$
\end{proof}

The estimate of Remark \ref{r:1}(3) is not optimal. The following
result gives a better upper bound for $C_{n,p}$.

\begin{corollary}\label{c:i1}%2.4
\begin{equation*} C_{n,p}^2 \leq \binom{n-2}{p-1} \end{equation*}
\end{corollary}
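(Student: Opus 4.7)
The plan is to prove the bound by induction on $n$, using Proposition \ref{p:i1} as the recursion step and Pascal's identity $\binom{n-3}{p-2}+\binom{n-3}{p-1}=\binom{n-2}{p-1}$ to make the bounds match up. The base cases will be the values of $p$ where Proposition \ref{p:i1} does not apply, namely $p=1$ and $p=n-1$, which are handled by Remark \ref{r:1}(2).

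More specifically, I would first verify the base cases: when $p=1$, we have $C_{n,1}^2=1=\binom{n-2}{0}$, and when $p=n-1$, we have $C_{n,n-1}^2=1=\binom{n-2}{n-2}$. These take care of all admissible $(n,p)$ with $n\le 3$ as well. For $n\ge 4$ and $1<p<n-1$, I would invoke Proposition \ref{p:i1} together with the inductive hypothesis applied to the pairs $(n-1,p-1)$ and $(n-1,p)$ (both of which have strictly smaller first coordinate, so the induction is legitimate) to obtain
\begin{equation*}
 C_{n,p}^2 \;\leq\; C_{n-1,p-1}^2 + C_{n-1,p}^2 \;\leq\; \binom{n-3}{p-2} + \binom{n-3}{p-1} \;=\; \binom{n-2}{p-1},
\end{equation*}
where the last equality is Pascal's rule.

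There is no genuinely hard step here: once the right form of the bound is guessed, the induction is essentially forced by the shape of the recursion in Proposition \ref{p:i1}, which mirrors Pascal's identity exactly. The only mild subtlety is making sure the base cases cover every boundary of the induction, in particular that the inductive hypothesis can always be applied to both terms on the right side of Proposition \ref{p:i1}; this is where the special values $p=1$ and $p=n-1$ (and implicitly the small-$n$ cases subsumed by them) must be in place before the recursion is used.
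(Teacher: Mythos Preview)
Your proposal is correct and matches the paper's own proof essentially line for line: the paper also handles the boundary cases $C_{n,1}^2=1=\binom{n-2}{0}$ and $C_{n,n-1}^2=1=\binom{n-2}{n-2}$, then inducts on $n$ via Proposition~\ref{p:i1} and Pascal's identity to obtain $C_{n,p}^2\leq\binom{n-3}{p-2}+\binom{n-3}{p-1}=\binom{n-2}{p-1}$. The only cosmetic difference is that the paper displays the $C_{n,p}^2$ in a Pascal-type triangle before carrying out the induction.
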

\begin{proof}
Arrange the constants $C_{n,p}^2$  in a triangular array in the manner of Pascal's Triangle:

\begin{center}
\begin{tabular}{ccccccc}
&&&$C_{2,1}^2$&&&\\
&&$C_{3,1}^2$&&$C_{3,2}^2$&&\\
&$C_{4,1}^2$&&$C_{4,2}^2$&&$C_{4,3}^2$&\\
$C_{5,1}^2$&&$C_{5,2}^2$&&$C_{5,3}^2$&&$C_{5,4}^2$\\
&&&\vdots\\
\end{tabular}
\end{center}
First observe that the first and last terms on each row of the triangle satisfy
\begin{equation*}
 C_{n,1}^2 =1 = \binom{n-2}{0}\quad\mathrm{and}\quad C_{n,n-1} =1= \binom{n-2}{n-2}.
 \end{equation*}
The rest of the proof is by induction,  for by Proposition \ref{p:i1},
\begin{equation*}
 C_{n,p}^2 \leq C_{n-1,p-1}^2 +C_{n-1,p}^2 \leq \binom{n-3}{p-2} +\binom{n-3}{p-1} = \binom{n-2}{p-1} 
 \end{equation*}
by the defining property of the binomial coefficients in Pascal's Triangle, namely, that $\binom{n}{k}+\binom{n}{k+1}=\binom{n+1}{k+1}$.
\end{proof}

Clearly, $\binom{n-2}{p-1} < \binom{n}{p}$ when $ 1 \leq p \leq n-1$. The next proposition gives an even better upper bound on $C_{n,p}$.

\begin{proposition}\label{p:i2}%2.5
Let $1\leq k<p<n$, then
\begin{equation*}
C_{n,p}^2 \leq \frac {\binom{n}{p-k}}{\binom{p}{k}} C_{n,k}^2.
\end{equation*}
\end{proposition}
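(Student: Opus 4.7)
The plan is to average $\phi$ against basic simple $(p-k)$-vectors and then use the already controlled constant $C_{n,k}$ to handle the resulting $k$-covectors. Fix an orthonormal basis $e_1,\ldots,e_n$ of $\mathbf{R}^n$ with dual basis $e_1^\ast,\ldots,e_n^\ast$, and for each size-$(p-k)$ multi-index $I\subset\{1,\ldots,n\}$ set $e_I:=e_{i_1}\wedge\cdots\wedge e_{i_{p-k}}$ and define a $k$-covector $\phi_I\in\Lambda^k(\mathbf{R}^n)^\ast$ by $\phi_I(\zeta):=\phi(e_I\wedge\zeta)$.

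First I would expand $\phi=\sum_{|J|=p}\phi_J\, e_J^\ast$ in the induced orthonormal basis of $\Lambda^p(\mathbf{R}^n)^\ast$ and compute the Euclidean norm of each $\phi_I$, obtaining $|\phi_I|^{\ast 2}=\sum_{L\cap I=\emptyset,\, |L|=k}\phi_{I\cup L}^2$. Summing over the $\binom{n}{p-k}$ choices of $I$, each coefficient $\phi_J^2$ is counted once for every way of splitting $J$ into a $(p-k)$-subset and a $k$-subset, namely $\binom{p}{p-k}=\binom{p}{k}$ times, yielding the identity
\begin{equation*}
\sum_{I}|\phi_I|^{\ast 2}=\binom{p}{k}\,|\phi|^{\ast 2}.
\end{equation*}

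Next I would bound each $\phi_I$ in comass. By definition of $C_{n,k}$ we have $|\phi_I|^\ast\leq C_{n,k}\Vert\phi_I\Vert^\ast$, so the remaining task is to show $\Vert\phi_I\Vert^\ast\leq\Vert\phi\Vert^\ast$. For any simple $k$-vector $\zeta$ with $|\zeta|\leq 1$, the wedge $e_I\wedge\zeta$ is either zero or a simple $p$-vector, and by the Hadamard inequality for wedge products its Euclidean norm is at most $|e_I|\,|\zeta|\leq 1$; hence $|\phi_I(\zeta)|=|\phi(e_I\wedge\zeta)|\leq\Vert\phi\Vert^\ast$, and taking the supremum over $\zeta$ gives $\Vert\phi_I\Vert^\ast\leq\Vert\phi\Vert^\ast$. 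Combining this with the first step produces
\begin{equation*}
\binom{p}{k}\,|\phi|^{\ast 2}=\sum_I|\phi_I|^{\ast 2}\leq\binom{n}{p-k}\,C_{n,k}^2\,\Vert\phi\Vert^{\ast 2},
\end{equation*}
and the proposition follows upon rearranging and taking the supremum over nonzero $\phi$.

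The one point that needs care is the comass bound $\Vert\phi_I\Vert^\ast\leq\Vert\phi\Vert^\ast$: it rests on the fact that the wedge of two simple vectors is again simple, so that we remain in the test class defining comass, together with the submultiplicativity of the Euclidean norm under wedge product. Once that is secured, the combinatorial factor $\binom{n}{p-k}/\binom{p}{k}$ emerges from a routine double count.
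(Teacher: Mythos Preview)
Your proof is correct and follows essentially the same route as the paper's: both contract $\phi$ against the basic simple $(p{-}k)$-vectors $e_J$, show the resulting $k$-covectors have comass at most $\Vert\phi\Vert^\ast$, and then assemble the Euclidean identity $\sum_J|\psi_J|^{\ast 2}=\binom{p}{k}\,|\phi|^{\ast 2}$ via the same double count. The only cosmetic difference is that the paper restricts the optimal test $k$-vector to $\mathrm{span}\{e_i:i\notin J\}$ so that its wedge with $e_J$ has norm exactly~$1$, whereas you invoke the submultiplicativity bound $|e_I\wedge\zeta|\le|e_I|\,|\zeta|$ directly.
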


\begin{proof}
Let $\phi = \sum_{I} \beta_I e_I^\ast  \in \Lambda^p(\mathbf{R}^n)^\ast$ where $I$ runs over all multi--indices of length $p$, that is, $I$ is a strictly increasing sequence of $p$ integers between $1$ and $n$.

Fix a multi--index $J$ of length $p-k$ of integers between $1$ and
$n$. Suppose $I$ is a multi--index of length $p$ that contains $J$ as
a subsequence.  In this case write $I \supset J$. Let $I\backslash J$
denote the multi--index of length $k$ complementary to $J$ in
$I$. Thus $e_{I\backslash J} \wedge e_J = \pm e_I \in
\Lambda^p(\mathbf{R}^n)$ where the sign depends on the permutation of
the elements in $I$. Pull out the terms of $\phi$ that contain all the
indices in $J$ by setting
\begin{equation*}
\phi_J  =  \sum_{I\supset J}  \beta_I e_I^\ast.
\end{equation*}
Then let
\begin{equation*}
\psi_J = \sum_{I\supset J} \pm \beta_I e^\ast_{I\backslash J} \in \Lambda^k(\mathbf{R}^n)^\ast,
\end{equation*}
 where  the sign is chosen to agree with the sign in $e_{I\backslash J} \wedge e_J = \pm e_I$, and the sums run over all multi-indices $I$ of length $p$ with $I \supset J$.
 Thus $\phi_J = \psi_J\wedge e_J^\ast$ and $ | \psi_J|^\ast =  \sqrt{\sum_{I \supset J} \beta_I^2}$.
 
 \begin{lemma}%2.6
 \begin{equation}\label{e:i2}
   \Vert \phi\Vert^{\ast2} C_{n,k}^2 \geq\sum_{I \supset J} \beta_I^2.
  \end{equation}
 \end{lemma}
 
 \begin{proof}
 Choose a simple $k$--vector  $\xi_J \in \Lambda^k(\mathbf{R}^n)$ such that $|\xi_J|=1$ and $ \psi_J(\xi_J) = \Vert \psi_J\Vert^\ast$. Let $V = \mathrm{span}\{ e_i : i \notin J\} \subset \mathbf{R}^n$. Since by construction $ \psi_J \in \Lambda^k(V)^\ast$, we may assume that $ \xi_J \in \Lambda^k(V) \subset \Lambda^k(\mathbf{R}^n)$.  Since $e_J$ is a simple $p-k$ vector of norm $1$, $ \xi_J \wedge e_J$ is a simple $p$-vector with $| \xi_J \wedge e_J|=1$ . Noting that $e_I^\ast(\xi_J\wedge e_J)=0$ when $I\not\supset J$, it follows that
 \begin{equation*}\Vert \phi\Vert^\ast \geq  \phi( \xi_J \wedge e_J)=  \phi_J( \xi_J \wedge e_J) = (\psi_J\wedge e_J^\ast)( \xi_J \wedge e_J)=\psi_J(\xi_J)e_J^\ast(e_J)  =  \Vert \psi_J\Vert^\ast.
 \end{equation*}
Hence, by definition of $C_{n,k}$,
\begin{equation*}
  \sqrt{\sum_{I \supset J} \beta_I^2} = |\psi_J|^\ast \leq C_{n,k} \Vert \psi_J\Vert^\ast \leq C_{n,k} \Vert \phi \Vert^\ast.
 \end{equation*} 
On squaring this inequality,
 \begin{equation*}
 \quad  \Vert \phi\Vert^{\ast2} C_{n,k}^2 \geq \sum_{I \supset J} \beta_I^2,
 \end{equation*}
 which completes the proof of the lemma.
 \end{proof}
 
 Returning to the proof of Proposition \ref{p:i2}, summing the inequality (\ref{e:i2}) over all $J$ gives
 \begin{equation*}
  \binom{n}{p-k}  C_{n,k}^2 \Vert \phi\Vert^{\ast2} \geq \sum_J\sum_{I \supset J} \beta_I^2 = \binom{p}{k}\sum_I \beta_I^2 =\binom{p}{k} |\phi|^{\ast 2}
  \end{equation*} 
 since in the double sum each $\beta_I^2$ occurs $\binom{p}{k}$ times, because there are $\binom{p}{k}$ multi--indices $J$ contained in each $I$.
 Hence
 \begin{equation*}
  \frac {|\phi|^{\ast 2}}{\Vert \phi\Vert^{\ast2}} \leq   \frac {\binom{n}{p-k}}{\binom{p}{k}} C_{n,k}^2. 
  \end{equation*}
 Therefore, since $\phi$ was arbitrary,
 \begin{equation*}
 C_{n,p}^2 \leq \frac {\binom{n}{p-k}}{\binom{p}{k}} C_{n,k}^2.
 \end{equation*}
 \end{proof}

 \begin{remark}%2.7
Proposition \ref{p:i2} improves the upper estimate on $C_{n,p}$ in Remark \ref{r:1}(3). For example, taking $k=1$, 
\begin{equation*}\frac {\binom{n}{p-1}}{\binom{p}{1}} C_{n,1}^2 = \frac 1{n-p+1}\binom{n}{p}.\end{equation*} 
 When $p=2$ and $k=1$,
\begin{equation*}\frac {\binom{n}{1}}{\binom{2}{1}} C_{n,1}^2 = \frac n2 \end{equation*} 
which, according to Remark \ref{r:1}(2), is the optimal bound on $C^2_{n,2}$ when $n$ is even, but not when $n$ is odd.
One may also  check that
\begin{equation*}
\frac 1{n-p+1}\binom{n}{p} \leq \binom{n-2}{p-1}
\end{equation*}
when $1\leq p \leq \frac {n}{2}$ and the inequality is strict unless $p=1$ or $(n,p) = (4,2)$.
Thus Proposition \ref{p:i2} generally provides a better upper bound on $C_{n,p}$ than Corollary \ref{c:i1}.

\end{remark}

\section{Exact values for selected $C_{n,p}$.}%S3

Utilizing  results from the theory of calibrations, we can compute the exact values of some of the $C_{n,p}$.
\begin{proposition}%3.1
{$C_{6,3} =2$.}
\end{proposition}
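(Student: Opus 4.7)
The plan is to prove $C_{6,3}\leq 2$ and $C_{6,3}\geq 2$ separately. The upper bound should fall out immediately from the recursion in Proposition \ref{p:i1}, while the lower bound requires exhibiting a specific $3$-covector; the natural candidate is the real part of a holomorphic volume form on $\mathbf{C}^3$, which is a calibration in the sense of Harvey--Lawson.

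For the upper bound, I would apply Proposition \ref{p:i1} with $n=6$ and $p=3$ to obtain
\begin{equation*}
C_{6,3}^2 \leq C_{5,2}^2 + C_{5,3}^2.
\end{equation*}
By Remark \ref{r:1}(2), $C_{5,2}^2 = \lfloor 5/2\rfloor = 2$, and by Hodge self-duality (Remark \ref{r:1}(1)), $C_{5,3}^2 = C_{5,2}^2 = 2$. Adding gives $C_{6,3}^2 \leq 4$.

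For the lower bound, identify $\mathbf{R}^6$ with $\mathbf{C}^3$ via $z_j = x_j + i y_j$ and consider the special Lagrangian form $\phi = \operatorname{Re}(dz_1\wedge dz_2\wedge dz_3)$. A short expansion of the wedge product and extraction of the real part yields
\begin{equation*}
\phi = dx_1\wedge dx_2\wedge dx_3 - dx_1\wedge dy_2\wedge dy_3 - dy_1\wedge dx_2\wedge dy_3 - dy_1\wedge dy_2\wedge dx_3,
\end{equation*}
so that $|\phi|^\ast = \sqrt{4} = 2$. The Harvey--Lawson calibration theorem then asserts that $\Vert\phi\Vert^\ast = 1$, the supremum being attained on the Grassmannian of special Lagrangian $3$-planes. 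Consequently $|\phi|^\ast / \Vert\phi\Vert^\ast = 2$ and $C_{6,3}\geq 2$, which combined with the upper bound gives equality.

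The essential nontrivial input is the comass evaluation $\Vert\operatorname{Re}(dz_1\wedge dz_2\wedge dz_3)\Vert^\ast = 1$, which I plan to cite from the theory of calibrations rather than reprove from scratch; everything else reduces to routine bookkeeping with the previously established inequalities.
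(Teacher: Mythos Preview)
Your proof is correct. The lower bound via the special Lagrangian form $\operatorname{Re}(dz_1\wedge dz_2\wedge dz_3)$ is exactly the example the paper uses (with slightly different sign conventions and citing Dadok--Harvey rather than Harvey--Lawson, but it is the same calibration). The upper bound, however, is obtained differently: the paper invokes Morgan's canonical form for $3$-covectors of comass~$1$ in $\mathbf{R}^6$ and bounds the Euclidean norm of the general canonical expression directly, whereas you apply the recursion of Proposition~\ref{p:i1} together with the known value $C_{5,2}^2=C_{5,3}^2=2$ to get $C_{6,3}^2\le 2+2=4$. Your route is more self-contained---it uses only results already established in the paper and avoids importing an external classification theorem---while the paper's route has the advantage of revealing the full structure of extremal $3$-forms in $\mathbf{R}^6$ rather than just the numerical bound.
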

\begin{proof}
By Theorem 4.1 in \cite{FM}, if $\phi \in \Lambda^3(\mathbf{R}^6)^\ast$ has comass $\Vert\phi\Vert^\ast =1$,  then there exists an orthonormal basis $e_1, \dots, e_6$ of $\mathbf{R}^6$  such that
\begin{equation}\label{e:C63}
\quad \phi = e^\ast_1\wedge e^\ast_2 \wedge e^\ast_3+ \mu_1e^\ast_1\wedge e^\ast_5 \wedge e^\ast_6+ \mu_2e^\ast_4\wedge e^\ast_2 \wedge e^\ast_6+ \mu_3e^\ast_4\wedge e^\ast_5 \wedge e^\ast_3
+\mu_4e^\ast_4\wedge e^\ast_5 \wedge e^\ast_6
\end{equation}
with $ 1 \geq \mu_1 \geq \mu_2 \geq |\mu_3|$ and $1 \geq \mu_1^2 +\mu_4^2$. Thus
$$|\phi|^{*2} = 1^2 + \mu_1^2 +\mu_2^2 + \mu_3^2 +\mu_4^2 = 1 + (\mu_1^2 +\mu_4^2) +\mu_2^2 +\mu_3^2 \leq 4.$$
Consequently $|\phi|^* \leq 2 \Vert \phi \Vert^*$ for every $\phi \in \Lambda^3(\mathbf{R}^6)^\ast$.

The value $2$ in this inequality is optimal because by Theorem 2 of Dadok and Harvey\cite{DH}
any $\phi$ of the form (\ref{e:C63}) has comass 1 when 
$$ \mu_1^2 + \mu_2^2 + \mu_3^2 +\mu_4^2 + 2\mu_1\mu_2\mu_3 \leq 1,$$
In particular, on taking $\mu_1=\mu_2 =1$, $\mu_3= -1$, and $\mu_4=0$, Dadok and Harvey's criterion is satisfied. Indeed, by \cite[Theorem 5]{DH}, the special Lagrangian form 
$$\phi = e^\ast_1\wedge e^\ast_2 \wedge e^\ast_3+ e^\ast_1\wedge e^\ast_5 \wedge e^\ast_6+ e^\ast_4\wedge e^\ast_2 \wedge e^\ast_6- e^\ast_4\wedge e^\ast_5 \wedge e^\ast_3$$
has comass 1 and Euclidean norm $|\phi|^* =2$. Therefore $C_{6,3} =2$.
\end{proof}

\begin{proposition}{$C_{8,4} = \sqrt{14}$.}\end{proposition}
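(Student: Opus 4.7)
The plan is to prove $C_{8,4} = \sqrt{14}$ by establishing matching upper and lower bounds.

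For the upper bound, I would apply Proposition \ref{p:i2} with $n = 8$, $p = 4$, and $k = 1$. Since $C_{8,1} = 1$ by Remark \ref{r:1}(2), and
$$\frac{\binom{8}{3}}{\binom{4}{1}} = \frac{56}{4} = 14,$$
the proposition immediately yields $C_{8,4}^2 \leq 14$. One can check that the other admissible choices $k=2$ and $k=3$ produce only weaker bounds (for instance $k=2$ gives $\frac{28}{6}\cdot C_{8,2}^2 = \frac{56}{3}$ using $C_{8,2}^2 = 4$), so $k=1$ is the optimal choice in the proposition for this case.

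For the matching lower bound, the strategy is to exhibit a single $4$-covector $\Phi \in \Lambda^4(\mathbf{R}^8)^{\ast}$ whose comass equals $1$ and whose Euclidean norm equals $\sqrt{14}$. The natural candidate is the Cayley $4$-form associated with the octonionic structure on $\mathbf{R}^8$. In an adapted orthonormal basis $e_1^{\ast},\dots,e_8^{\ast}$, it is expressible as a sum of exactly $14$ distinct basic monomials $e_{i_1}^{\ast}\wedge e_{i_2}^{\ast}\wedge e_{i_3}^{\ast}\wedge e_{i_4}^{\ast}$ with coefficients $\pm 1$. The Euclidean norm $|\Phi|^{\ast} = \sqrt{14}$ is then immediate from the orthonormality of the basic $4$-covectors.

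The main obstacle is the comass identity $\Vert\Phi\Vert^{\ast} = 1$. Rather than verifying this by hand against all simple $4$-vectors of unit Euclidean norm (which would require a genuine computation), I would invoke the theorem of Harvey and Lawson asserting that the Cayley form is a calibration of Cayley $4$-planes in $\mathbf{R}^8$; by definition of a calibration, this is exactly the statement $\Vert\Phi\Vert^{\ast}=1$. Combined with the upper bound above, this yields $C_{8,4} = \sqrt{14}$, with the Cayley form realizing the extremum in the same way that the special Lagrangian form did in the preceding proposition.
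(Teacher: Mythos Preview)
Your proof is correct and follows essentially the same approach as the paper: both obtain the upper bound $C_{8,4}^2\le 14$ from Proposition~\ref{p:i2} with $k=1$, and both realize equality via the Cayley $4$-form, citing Harvey--Lawson for the comass identity $\Vert\Phi\Vert^\ast=1$. Your added remarks (checking that $k=2,3$ give weaker bounds, and explaining $|\Phi|^\ast=\sqrt{14}$ via the fourteen $\pm1$ monomials) are correct elaborations but not substantively different.
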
%3.2
\begin{proof}
The estimate for $C_{8,4}$ taking $k=1$ in Proposition \ref{p:i2} is $C_{8,4}^2 \leq 14$. On the other hand, the Cayley 4-form, $\Phi \in \Lambda^4(\mathbf{R}^8)^\ast$,  has comass 1 and Euclidean norm $\sqrt{14}$ \cite{RB, HL}. Thus $C_{8,4} = \sqrt{14}$.
\end{proof}

\begin{proposition}{$C_{7,3}=C_{7,4}=\sqrt 7$.}\end{proposition}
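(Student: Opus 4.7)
The plan is to mimic the pattern of the preceding two propositions: first reduce the two equalities to a single one via Hodge duality, then establish an upper bound from Proposition \ref{p:i2}, and finally exhibit a known calibration realizing that bound.

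By Remark \ref{r:1}(1), the Hodge star preserves both the Euclidean and the comass norms, so $C_{7,4}=C_{7,3}$ and it suffices to compute $C_{7,3}$. For the upper bound I would apply Proposition \ref{p:i2} with $n=7$, $p=3$, and $k=1$:
\begin{equation*}
C_{7,3}^2 \leq \frac{\binom{7}{2}}{\binom{3}{1}}C_{7,1}^2 = \frac{21}{3}\cdot 1 = 7,
\end{equation*}
using $C_{7,1}=1$ from Remark \ref{r:1}(2). Hence $C_{7,3}\leq \sqrt{7}$.

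For the matching lower bound, the natural candidate — by analogy with the Cayley $4$-form used for $C_{8,4}=\sqrt{14}$ — is the $G_2$-invariant associative $3$-form on $\mathbf{R}^7$. In the standard basis one may take
\begin{equation*}
\varphi = e^\ast_1\wedge e^\ast_2\wedge e^\ast_3 + e^\ast_1\wedge e^\ast_4\wedge e^\ast_5 + e^\ast_1\wedge e^\ast_6\wedge e^\ast_7 + e^\ast_2\wedge e^\ast_4\wedge e^\ast_6 - e^\ast_2\wedge e^\ast_5\wedge e^\ast_7 - e^\ast_3\wedge e^\ast_4\wedge e^\ast_7 - e^\ast_3\wedge e^\ast_5\wedge e^\ast_6.
\end{equation*}
Since $\varphi$ is a sum of $7$ orthonormal basis covectors with coefficients $\pm 1$, one has $|\varphi|^\ast = \sqrt{7}$; and by the fundamental theorem of Harvey and Lawson on associative calibrations (\cite{HL}, cited in the previous proof), $\varphi$ is a calibration, so $\Vert\varphi\Vert^\ast = 1$. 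This yields $C_{7,3}\geq \sqrt{7}$, completing the equality.

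The genuinely non-trivial ingredient is the calibration property $\Vert\varphi\Vert^\ast=1$ of the associative $3$-form; the rest is a direct count. I would quote this from Harvey–Lawson exactly as the proof of $C_{8,4}=\sqrt{14}$ quotes the Cayley $4$-form result, rather than re-deriving it.
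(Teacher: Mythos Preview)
Your argument is correct, but the paper takes a different route. Instead of invoking the associative calibration directly, the paper uses Proposition~\ref{p:i1} twice, reading the Pascal triangle of $C_{n,p}^2$ values in both directions. The upper bound comes from $C_{7,3}^2 \leq C_{6,2}^2 + C_{6,3}^2 = 3 + 4 = 7$ (using the already-established $C_{6,3}^2=4$), while the lower bound comes from the \emph{reverse} application $C_{8,4}^2 \leq C_{7,3}^2 + C_{7,4}^2 = 2C_{7,3}^2$, which together with the previously computed $C_{8,4}^2 = 14$ forces $C_{7,3}^2 \geq 7$.

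The trade-off: your proof is self-contained for $n=7$ and structurally parallel to the $C_{8,4}$ computation, but it imports a second deep calibration fact (comass $1$ for the associative form) from Harvey--Lawson. The paper's proof avoids any new external input by squeezing $C_{7,3}$ between the already-known values $C_{6,3}$ and $C_{8,4}$; this is attractive because it shows that the recursive inequality of Proposition~\ref{p:i1} is sometimes sharp enough to pin down exact values once neighboring entries are known. Of course the Cayley-form input behind $C_{8,4}=\sqrt{14}$ is doing the heavy lifting there, so at bottom both arguments rest on exceptional calibrations --- yours on $G_2$, the paper's indirectly on $\mathrm{Spin}(7)$.
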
%3.3
\begin{proof}
Write out the first 8 rows of the triangle of the $C_{n,p}^2$, using all known exact values, including $C_{6,3}^2 = 4$ and $C_{8,4}^2 =14$, to obtain:

\begin{center}
\begin{tabular}{ccccccccccccc}
&&&&&&1\\
&&&&&1&&1\\
&&&&1&&2&&1\\
&&&1&&2&&2&&1\\
&&1&&3&&4&&3&&1\\
&1&&3&&$C_{7,3}^2$&&$C_{7,4}^2$&&3&&1\\
1&&4&&$C_{8,3}^2$&&14&&$C_{8,5}^2$&&4&&1\\
\end{tabular}
\end{center}
Applying Proposition \ref{p:i1}  we find that $C_{7,3}^2 = C_{7,4}^2 \leq 7$ and $C_{7,3}^2 + C_{7,4}^2 \geq 14$, which implies that $C_{7,3}^2 = C_{7,4}^2=7$.
\end{proof}

\section{Wedge products}%S4

\begin{proposition}\label{p:w2}%4.1
If $\phi \in \Lambda^p(\mathbf{R}^n)^\ast$ and $\psi \in \Lambda^{n-p}(\mathbf{R}^n)^\ast$, then
 \begin{equation*}
 |\phi\wedge\psi|^\ast \leq C_{n,p}^2  \Vert \phi \Vert^\ast  \Vert \psi\Vert ^\ast.
 \end{equation*}
\end{proposition}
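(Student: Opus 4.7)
The plan is to reduce the wedge product, which lives in the top degree $\Lambda^n(\mathbf{R}^n)^\ast$, to an inner product via Hodge duality, apply Cauchy--Schwarz, and then invoke the defining inequality for $C_{n,p}$ (together with the symmetry $C_{n,n-p}=C_{n,p}$ from Remark \ref{r:1}(1)) to pass from Euclidean norms back to comass norms.

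More concretely, the first step is to use the Hodge star $\star\colon\Lambda^{n-p}(\mathbf{R}^n)^\ast\to\Lambda^p(\mathbf{R}^n)^\ast$, which is an isometry with respect to the Euclidean inner product, and the identity
\begin{equation*}
\phi\wedge\psi=\langle\phi,\star\psi\rangle\,e_1^\ast\wedge\cdots\wedge e_n^\ast.
\end{equation*}
Since $e_1^\ast\wedge\cdots\wedge e_n^\ast$ is simple and of Euclidean norm one, the Euclidean norm of the top-degree covector $\phi\wedge\psi$ coincides with the absolute value of the scalar $\langle\phi,\star\psi\rangle$.

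The second step is Cauchy--Schwarz applied in the Euclidean inner product on $\Lambda^p(\mathbf{R}^n)^\ast$, combined with the fact that $\star$ preserves Euclidean length:
\begin{equation*}
|\phi\wedge\psi|^\ast=|\langle\phi,\star\psi\rangle|\leq |\phi|^\ast\,|\star\psi|^\ast=|\phi|^\ast\,|\psi|^\ast.
\end{equation*}

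The third step is to invoke the definition of $C_{n,p}$ twice: $|\phi|^\ast\leq C_{n,p}\Vert\phi\Vert^\ast$ and $|\psi|^\ast\leq C_{n,n-p}\Vert\psi\Vert^\ast$. By Remark \ref{r:1}(1) we have $C_{n,n-p}=C_{n,p}$, and multiplying the two inequalities yields the desired bound
\begin{equation*}
|\phi\wedge\psi|^\ast\leq C_{n,p}^2\,\Vert\phi\Vert^\ast\,\Vert\psi\Vert^\ast.
\end{equation*}
There is no real obstacle here; the only point that needs care is the identification of $|\phi\wedge\psi|^\ast$ with $|\langle\phi,\star\psi\rangle|$ and the reminder that the Hodge star is a Euclidean isometry (which lets Cauchy--Schwarz be applied cleanly). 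Everything else is a direct appeal to earlier facts.
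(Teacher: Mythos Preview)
Your proof is correct and follows essentially the same approach as the paper's own argument: identify $|\phi\wedge\psi|^\ast$ with $|\langle\phi,\star\psi\rangle|$, apply Cauchy--Schwarz together with the fact that the Hodge star is a Euclidean isometry, and then use the defining inequality for $C_{n,p}$ along with the symmetry $C_{n,n-p}=C_{n,p}$. You have simply spelled out the intermediate steps more explicitly than the paper does.
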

\begin{proof}
Using the Cauchy-Schwarz inequality, 
\begin{equation*}
|\phi\wedge\psi|^\ast = |\langle \phi, *\psi \rangle| \leq |\phi|^\ast |\psi|^\ast \leq C_{n,p} \Vert \phi \Vert^\ast C_{n,n-p} \Vert \psi\Vert ^\ast = C_{n,p}^2  \Vert \phi \Vert^\ast  \Vert \psi\Vert ^\ast
\end{equation*}
since $C_{n,p} =C_{n,n-p}$.
\end{proof}

 Since $C_{p+q,p}^2  < \binom{p+q}{p}$, the next corollary improves the bound 
 \begin{equation*}\Vert\phi\wedge\psi\Vert^\ast \leq  \binom{p+q}{p} \Vert \phi\Vert^\ast\Vert \psi\Vert^\ast \end{equation*} 
 in \cite[p. 39]{HF} or \cite[p. 55]{HW} where $\phi$ and $\psi$ are covectors in degrees $p$ and $q$ respectively.

\begin{corollary}\label{c:w2} %4.2
If $\phi \in \Lambda^p(\mathbf{R}^n)^\ast$ and $\psi \in \Lambda^{q}(\mathbf{R}^n)^\ast$, then
\begin{equation*}
\Vert\phi\wedge\psi\Vert^\ast \leq C_{p+q,p}^2 \Vert \phi\Vert^\ast\Vert \psi\Vert^\ast.
\end{equation*}
\end{corollary}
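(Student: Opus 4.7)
The plan is to reduce this to the top-degree case covered by Proposition \ref{p:w2}. The comass of $\phi\wedge\psi$ is the supremum of $(\phi\wedge\psi)(\xi)$ over unit simple $(p+q)$-vectors $\xi$, and any such $\xi$ is a unit volume form on the $(p+q)$-dimensional subspace $V = \mathrm{span}(\xi) \subset \mathbf{R}^n$ that it determines. So I would fix an arbitrary unit simple $\xi$ and work on $V$.

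Restrict both covectors to $V$, obtaining $\phi|_V \in \Lambda^p(V)^\ast$ and $\psi|_V \in \Lambda^q(V)^\ast$. Because every simple vector in $V$ is a simple vector in $\mathbf{R}^n$, the restriction only decreases the comass, giving $\Vert\phi|_V\Vert^\ast \leq \Vert\phi\Vert^\ast$ and $\Vert\psi|_V\Vert^\ast \leq \Vert\psi\Vert^\ast$. Identifying $V$ with $\mathbf{R}^{p+q}$ by any orientation-preserving isometry, I apply Proposition \ref{p:w2} to $\phi|_V$ and $\psi|_V$ (which are now in complementary degrees adding to $\dim V$) to conclude
\begin{equation*}
|\phi|_V \wedge \psi|_V|^\ast \leq C_{p+q,p}^2 \Vert\phi|_V\Vert^\ast \Vert\psi|_V\Vert^\ast \leq C_{p+q,p}^2 \Vert\phi\Vert^\ast \Vert\psi\Vert^\ast.
\end{equation*}

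Finally, $(\phi\wedge\psi)(\xi) = (\phi|_V \wedge \psi|_V)(\xi)$, and since $\xi$ is a unit vector in the one-dimensional space $\Lambda^{p+q}(V)$, this pairing is bounded in absolute value by the Euclidean norm $|\phi|_V \wedge \psi|_V|^\ast$. Combining with the inequality above and taking the supremum over all unit simple $\xi$ produces $\Vert\phi\wedge\psi\Vert^\ast \leq C_{p+q,p}^2 \Vert\phi\Vert^\ast \Vert\psi\Vert^\ast$.

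There isn't really a hard step here; the whole point is the localization trick that every simple vector lies in a subspace of the minimum possible dimension $p+q$, which reduces the bound to the top-degree Proposition \ref{p:w2} with $n$ replaced by $p+q$. The only small things to verify carefully are that restriction does not increase comass and that pairing with a unit top-degree simple vector recovers the Euclidean norm in that subspace.
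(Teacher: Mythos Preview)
Your proof is correct and follows essentially the same approach as the paper: restrict $\phi$ and $\psi$ to the $(p+q)$-dimensional subspace $V$ determined by a unit simple $(p+q)$-vector $\xi$, apply Proposition~\ref{p:w2} there, and use that restriction does not increase comass. The only cosmetic difference is that the paper selects at the outset a $\xi$ attaining the comass of $\phi\wedge\psi$, whereas you bound $(\phi\wedge\psi)(\xi)$ for arbitrary $\xi$ and take the supremum at the end.
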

\begin{proof}
Let $\xi \in \Lambda^{p+q}(\mathbf{R}^n)$ be a simple $(p+q)$-vector with  $|\xi|=1$ such that $(\phi\wedge \psi)(\xi) = \Vert \phi\wedge \psi\Vert^\ast$.
Let $V$ be the $(p+q)$-dimensional subspace of $\mathbf{R}^n$ defined by $\xi$ so that $\xi \in \Lambda^{p+q}(V)$, and
let $\phi_V \in \Lambda^p(V)$ and $\psi_V \in \Lambda^q(V)$ be the restrictions of $\phi$ and $\psi$ to $V$. Then $\Vert \phi_V \Vert^\ast \leq \Vert \phi\Vert^\ast$, $\Vert \psi_V \Vert^\ast \leq \Vert \psi\Vert^\ast$, and
\begin{equation*}
|\phi_V\wedge\psi_V|^\ast = (\phi_V \wedge \psi_V)(\xi) = (\phi\wedge \psi)(\xi) = \Vert \phi\wedge \psi\Vert^\ast.
\end{equation*}
Therefore applying Proposition \ref{p:w2}, 
\begin{equation*}
\Vert \phi\wedge \psi\Vert^\ast= |\phi_V\wedge\psi_V|^\ast \leq C_{p+q,p}^2 \Vert \phi_V\Vert^\ast\Vert\psi_V\Vert^\ast \leq C_{p+q,p}^2 \Vert \phi\Vert^\ast\Vert\psi\Vert^\ast.
\end{equation*}
\end{proof}

\begin{proposition}\label{p:wm}%4.3
Let $\phi_1, \dots, \phi_m \in \Lambda^p(\mathbf{R}^{mp})^\ast$, then
\begin{equation*}
|\phi_1\wedge\cdots\wedge\phi_m|^\ast \leq C_{mp,p}^2C_{(m-1)p,p}^2 \cdots C_{2p,p}^2 \Vert \phi_1\Vert^\ast \cdots \Vert \phi_m\Vert^\ast
\end{equation*}
\end{proposition}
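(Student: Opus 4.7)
The plan is to induct on $m$. The base case $m=2$ is exactly Proposition~\ref{p:w2} applied in ambient dimension $2p$, since then $n-p = p$ and we get $|\phi_1\wedge\phi_2|^\ast \leq C_{2p,p}^2 \Vert\phi_1\Vert^\ast\Vert\phi_2\Vert^\ast$.

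For the inductive step, assume the statement holds for $m-1$ covectors in $\mathbf{R}^{(m-1)p}$, and let $\phi_1,\dots,\phi_m \in \Lambda^p(\mathbf{R}^{mp})^\ast$. Set $\eta = \phi_2\wedge\cdots\wedge\phi_m \in \Lambda^{(m-1)p}(\mathbf{R}^{mp})^\ast$. Since $\phi_1$ has degree $p$ and $\eta$ has degree $(m-1)p$, their wedge is a top-degree covector in $\mathbf{R}^{mp}$, and Proposition~\ref{p:w2} gives
\begin{equation*}
|\phi_1\wedge\eta|^\ast \leq C_{mp,p}^2\,\Vert\phi_1\Vert^\ast\,\Vert\eta\Vert^\ast.
\end{equation*}

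The remaining task, and the one place where a little work is needed, is to bound $\Vert\eta\Vert^\ast = \Vert\phi_2\wedge\cdots\wedge\phi_m\Vert^\ast$ by a product of the individual comasses. For this I would imitate the restriction trick used in the proof of Corollary~\ref{c:w2}: choose a simple $(m-1)p$-vector $\xi$ of unit Euclidean norm realizing the comass of $\eta$, let $V \subset \mathbf{R}^{mp}$ be the $(m-1)p$-dimensional subspace spanned by $\xi$, and write $\phi_{i,V}$ for the restriction of $\phi_i$ to $V$. Then $\Vert\phi_{i,V}\Vert^\ast \leq \Vert\phi_i\Vert^\ast$ and
\begin{equation*}
\Vert\eta\Vert^\ast = \eta(\xi) = (\phi_{2,V}\wedge\cdots\wedge\phi_{m,V})(\xi) = |\phi_{2,V}\wedge\cdots\wedge\phi_{m,V}|^\ast,
\end{equation*}
the last equality because the wedge on $V$ is a top-degree form on $V$ and is evaluated on a unit-norm element of $\Lambda^{(m-1)p}(V)$. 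Now the inductive hypothesis applies inside $V \cong \mathbf{R}^{(m-1)p}$, yielding
\begin{equation*}
|\phi_{2,V}\wedge\cdots\wedge\phi_{m,V}|^\ast \leq C_{(m-1)p,p}^2 C_{(m-2)p,p}^2 \cdots C_{2p,p}^2 \Vert\phi_2\Vert^\ast\cdots\Vert\phi_m\Vert^\ast.
\end{equation*}

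Substituting this bound on $\Vert\eta\Vert^\ast$ back into the estimate from Proposition~\ref{p:w2} produces exactly the desired product of constants, completing the induction. The only real obstacle is the middle step of passing from $\Vert\phi_2\wedge\cdots\wedge\phi_m\Vert^\ast$ in $\mathbf{R}^{mp}$ down to a Euclidean norm of a wedge in the smaller space $\mathbf{R}^{(m-1)p}$ where induction is available; the restriction-to-$V$ argument from Corollary~\ref{c:w2} is precisely what makes this transition, and nothing more delicate is required.
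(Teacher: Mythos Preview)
Your proof is correct and follows essentially the same route as the paper's: both peel off one factor at a time, using at each stage the restriction-to-a-subspace idea behind Corollary~\ref{c:w2}. The paper simply quotes Corollary~\ref{c:w2} repeatedly (working with comasses throughout and noting at the end that the top-degree comass equals the Euclidean norm), whereas you inline that corollary's proof inside your induction step; the content is identical.
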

\begin{proof}
Applying Corollary \ref{c:w2} successively multiple times, we have that
\begin{eqnarray*}
\Vert\phi_1\wedge\cdots\wedge\phi_m\Vert^\ast  &\leq& C_{mp,p}^2 \Vert \phi_1\wedge\cdots\wedge\phi_{m-1}\Vert^\ast\Vert \phi_m\Vert^\ast\\
 &\leq&C_{mp,p}^2 C_{p(m-1),p}^2 \Vert \phi_1\wedge\cdots\wedge\phi_{m-2}\Vert^\ast\Vert \phi_{m-1}\Vert^\ast\Vert \phi_m\Vert^\ast\\
 &\vdots&\\
 &\leq&C_{mp,p}^2C_{(m-1)p,p}^2 \cdots C_{2p,p}^2 \Vert \phi_1\Vert^\ast \cdots \Vert \phi_m\Vert^\ast
\end{eqnarray*}
Finally, $\Vert\phi_1\wedge\cdots\wedge\phi_m\Vert^\ast  = |\phi_1\wedge\cdots\wedge\phi_m|^\ast $, since $\phi_1\wedge\cdots\wedge\phi_m$ is an $mp$-covector in 
$\mathbf{R}^{mp}$. 
\end{proof}

In the case $p=2$, $C_{2m,2}^2C_{2(m-1),2}^2 \cdots C_{4,2}^2=m!$ by Remark \ref{r:1}(2) (cf. \cite{GHK}.)

\section{Systolic inequalities}%S5

Let $M$ be a smooth  compact orientable manifold of dimension $n$. The image  of the $p$--th integral homology group $H_p(M,\mathbf{Z})$ in $H_p(M,\mathbf{R})$  is the lattice, denoted $\mathsf{L}_p(M)$, in $H_p(M,\mathbf{R})$.  Similarly, the image of the $p$-th integral cohomology group
$H^p(M,\mathbf{Z})$ in $H^p(M,\mathbf{R})$, denoted $\mathsf{L}^p(M)$, is the dual lattice of $\mathsf{L}_p(M)$ 
under the Kronecker pairing \cite[Lemma 15.4.2]{MK}.  In the following, we identify $H^p(M,\mathbf{R})$ with the de Rham cohomology group $H_{dR}^p(M)$
under the canonical isomorphism \cite[Theorem 5.6]{FW} and thereby regard $\mathsf{L}^p(M)$ as a lattice in $H_{dR}^p(M)$ represented by closed forms of integral periods.

If $M$ is endowed with a Riemannian metric $g$, the comass of a differential $p$-form $\phi \in\Omega^p(M)$ is defined to be the supremum of the pointwise comasses, 
\begin{equation*}
\Vert \phi \Vert^\ast_\infty = \sup \{ \Vert \phi_x\Vert^\ast : x \in M\},
\end{equation*}
and the comass $\Vert \alpha\Vert^\ast$ of a de Rham cohomology class $\alpha \in H^p_{dR}(M)$ is defined to be the infimum the comasses of all closed $p$-forms representing $\alpha$.  The mass of a $p$- dimensional current $T$ on $M$ is defined to be
\begin{equation*}
\vert T\vert  = \sup \{ T(\phi) : \phi \in \Omega^p(M) \enspace\mathrm{such\enspace  that}\enspace \Vert\phi\Vert_\infty^\ast \leq 1\},
\end{equation*}
and the stable mass norm $\Vert h\Vert$ of a homology class $h \in H_p(M,\mathbf{R})$ is defined to be the infimum of of $|T|$ over all cycles  $T$ that represent $h$.
The stable mass norm in homology is dual to the comass norm in cohomology. (See \cite[4.35]{MG}, \cite[4.10]{HF2}.)

If the $p$-th Betti number of $M$ is $b$,  let $\lambda_1(\mathsf{L}_p(M)),\dots, \lambda_b(\mathsf{L}_p(M))$ denote the successive minimums of the lattice $\mathsf{L}_p(M)$ relative to the stable mass norm, and let $\lambda_1(\mathsf{L}^p(M)),\dots, \lambda_b(\mathsf{L}^p(M))$ denote the successive minimums of the lattice $\mathsf{L}^p(M)$ relative to the comass norm \cite{JC}.

Our inequalities will need the constant $\Gamma_b$ defined as follows:. 

\begin{definition} Let $b>0$. Then $\Gamma_b$ 
is the supremum of $\lambda_1(L) \lambda_b(L^\ast) $ over all lattices in all $b$-dimensional Banach spaces where $\lambda_1(L)$ is the first successive minimum of $L$ and  $\lambda_b(L^\ast)$ is the $b$-th successive minimum of the dual lattice $L^\ast$.
\end{definition}

\begin{theorem}\label{t:s2}%5.2
Let $M$ be a compact orientable manifold of dimension $n$ with $p$-th Betti number $b >0$.  Then for all Riemannian metrics $g$ on $M$,
\begin{equation*}\frac {\mathrm{stsys}_p(M,g)\mathrm{stsys}_{n-p}(M,g)}{\mathrm{vol}(M,g)} \leq C_{n,p}^2 (\Gamma_b)^2.
\end{equation*}
\end{theorem}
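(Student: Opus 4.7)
The plan is to pair an integral class $\alpha \in \mathsf{L}^p(M)$ realizing the first minimum of the comass with a well-chosen integral class $\beta \in \mathsf{L}^{n-p}(M)$ via Poincar\'e duality, bound the resulting pairing below by integrality and above using Proposition \ref{p:w2}, and then translate cohomological successive minima into stable systoles using the defining inequality of $\Gamma_b$.

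First I would fix $\varepsilon>0$ and choose $\alpha \in \mathsf{L}^p(M)$ with $\Vert\alpha\Vert^\ast \leq \lambda_1(\mathsf{L}^p(M)) + \varepsilon$, together with linearly independent $\beta_1,\dots,\beta_b \in \mathsf{L}^{n-p}(M)$ realizing the successive minima of $\mathsf{L}^{n-p}(M)$ up to $\varepsilon$. Since $\alpha\neq 0$ and Poincar\'e duality is non-degenerate, the linear functional $\beta\mapsto \langle \alpha\cup\beta, [M]\rangle$ is non-zero on $H^{n-p}(M,\mathbf{R})$, so some $\beta_{j_0}$ pairs non-trivially with $\alpha$. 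As $[M]$ is the integral fundamental class and the cup product of integral classes is integral, this pairing is a non-zero integer and has absolute value at least $1$. Representing $\alpha$ and $\beta_{j_0}$ by closed forms $\phi,\psi$ whose pointwise comasses approximate the cohomological comasses, and using the de Rham identification $\langle \alpha\cup\beta_{j_0},[M]\rangle=\int_M\phi\wedge\psi$, I obtain
\begin{equation*}
1 \leq \left\vert\int_M \phi\wedge\psi\right\vert \leq \Vert\phi\wedge\psi\Vert_\infty^\ast \cdot \mathrm{vol}(M,g).
\end{equation*}
Applying Proposition \ref{p:w2} pointwise yields $\Vert\phi_x\wedge\psi_x\Vert^\ast \leq C_{n,p}^2 \Vert\phi_x\Vert^\ast \Vert\psi_x\Vert^\ast$; passing to the supremum and then letting $\varepsilon\to 0$ gives the cohomological estimate
\begin{equation*}
1 \leq C_{n,p}^2 \, \lambda_1(\mathsf{L}^p(M)) \, \lambda_b(\mathsf{L}^{n-p}(M)) \, \mathrm{vol}(M,g).
\end{equation*}

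To conclude, I would apply the defining inequality of $\Gamma_b$ twice: once to the $b$-dimensional Banach space $(H^p(M,\mathbf{R}),\Vert\cdot\Vert^\ast)$ with lattice $\mathsf{L}^p(M)$ and dual lattice $\mathsf{L}_p(M)$, and once to $(H_{n-p}(M,\mathbf{R}),\Vert\cdot\Vert)$ with lattice $\mathsf{L}_{n-p}(M)$ and dual lattice $\mathsf{L}^{n-p}(M)$. This yields
\begin{equation*}
\lambda_1(\mathsf{L}^p(M))\,\lambda_b(\mathsf{L}_p(M)) \leq \Gamma_b \quad \mathrm{and} \quad \lambda_1(\mathsf{L}_{n-p}(M))\,\lambda_b(\mathsf{L}^{n-p}(M)) \leq \Gamma_b.
\end{equation*}
Since $\lambda_1\leq\lambda_b$ on any lattice, these give $\lambda_1(\mathsf{L}^p(M))\leq \Gamma_b/\mathrm{stsys}_p(M,g)$ and $\lambda_b(\mathsf{L}^{n-p}(M))\leq \Gamma_b/\mathrm{stsys}_{n-p}(M,g)$. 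Substituting into the cohomological estimate and rearranging gives the theorem.

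The main obstacle I anticipate is securing the lower bound $\vert\int_M\phi\wedge\psi\vert\geq 1$: the forms must be chosen with integral periods so that the de Rham integral realizes the integer-valued cup product pairing against $[M]$, and one must also ensure that representatives of a cohomology class can be chosen whose pointwise comass approximates the cohomological comass. Both facts are standard, but they are the technical heart of the argument, since the remaining steps—pointwise application of Proposition \ref{p:w2} and the two uses of $\Gamma_b$—are essentially bookkeeping.
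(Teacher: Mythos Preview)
Your proposal is correct and follows essentially the same route as the paper: Poincar\'e duality to find integral classes with non-zero cup product, integrality of $\int_M \alpha\cup\beta$, pointwise application of Proposition~\ref{p:w2}, and two invocations of $\Gamma_b$ to convert cohomological successive minima into stable systoles. The only bookkeeping difference is that the paper bounds $\Vert\alpha\Vert^\ast$ by $\lambda_b(\mathsf{L}^p(M))$ rather than $\lambda_1(\mathsf{L}^p(M))$, which lets it apply the definition of $\Gamma_b$ in its native form $\lambda_1(\mathsf{L}_p)\lambda_b(\mathsf{L}^p)\leq\Gamma_b$ without the extra step $\lambda_1(\mathsf{L}_p)\leq\lambda_b(\mathsf{L}_p)$; your sharper intermediate bound is immediately given back at that step, so the two arguments are equivalent.
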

\begin{proof}
By Poincar\'e duality, there exist cohomology classes $\alpha \in H^p_{dR}(M)$ and $\beta \in H^{n-p}_{dR}(M)$ such that $\alpha \cup \beta \neq 0$. Since 
the vector space $H^p_{dR}(M)$ is spanned by a set 
consisting of cohomology classes in $\mathsf{L}^p(M)$ of comass at most $\lambda_b(\mathsf{L}^p(M))$,  and $H^{n-p}_{dR}(M)$ is spanned by a set consisting of cohomology classes in $\mathsf{L}^{n-p}(M)$ of comass at most $\lambda_b(\mathsf{L}^{n-p}(M))$,
we may assume that  $\alpha \in \mathsf{L}^p(M)$ with $\Vert \alpha \Vert^\ast \leq \lambda_b(\mathsf{L}^p(M))$ and that $\beta \in \mathsf{L}^{n-p}(M)$ with  $\Vert \beta \Vert^\ast \leq \lambda_b(\mathsf{L}^{n-p}(M))$.

Let $\phi$ and $\psi$ be closed differential forms representing $\alpha$ and $\beta$.
By Proposition \ref{p:w2} applied pointwise, $|\phi\wedge \psi|^\ast \leq C_{n,p}^2\Vert \phi \Vert_\infty^\ast \Vert \psi\Vert_\infty^\ast$. Then
\begin{eqnarray}\label{e:s2}
1 &\leq& \left| \int_M \alpha \cup \beta \right| \leq C_{n,p}^2\Vert \alpha \Vert^\ast \Vert \beta\Vert^\ast \mathrm{vol}(M,g) \\
&\leq& C_{n,p}^2 \lambda_b(\mathsf{L}^p(M)) \lambda_b(\mathsf{L}^{n-p}(M)) \mathrm{vol}(M,g).
\end{eqnarray}
Divide inequality (\ref{e:s2}) by $\mathrm{vol}(M,g)$ and multiply  by the product of
 $\mathrm{stsys}_p (M,g) = \lambda_1(\mathsf{L}_p(M))$ and $\mathrm{stsys}_p (M,g) = \lambda_1(\mathsf{L}_{n-p}(M))$, to obtain
 \begin{eqnarray*}
\frac{\mathrm{stsys}_p (M,g)\mathrm{stsys}_p (M,g)}{\mathrm{vol}(M,g)}&\leq & C_{n,p}^2 \lambda_1(\mathsf{L}_p(M))\lambda_b(\mathsf{L}^{p}(M))
 \lambda_1(\mathsf{L}_{n-p}(M))\lambda_b(\mathsf{L}^{n-p}(M))\\
 &\leq& C_{n,p}^2 (\Gamma_b)^2
\end{eqnarray*}
by the definition of $\Gamma_b$.
\end{proof}

\begin{remark} 
The constant $\Gamma_b$ is a modification of the Hermite constant $\gamma_b$. The modification is needed to take into account non-Euclidean normed spaces that arise naturally on cohomology groups.

The constants $\Gamma_b$ are not known to be a linear function of the Betti number, since Banaszczyk \cite[Corollary 2]{WB} only proves  $\Gamma_b \leq C( b (1+\log b))$ for some constant $C$.  But in a pair of complementary dimensions, as in Theorem \ref{t:s2}, by using the $L^2$ norm on harmonic forms, one can reduce everything to lattices in Euclidean space, thereby replacing  $\Gamma_b$ by the Hermite constant $\gamma_b$  in the general case \cite[Proposition 6]{JH} or  by the  Berg\'e-Martinet constant  $\gamma^\prime_b$ in the $(1,n-1)$ case \cite{BK}. The Hermite and Berg\'e-Martinet constants satisfy $\gamma_b^\prime \leq \gamma_b \leq \frac 32 b$ for $b\geq 2$ \cite[(5.3.4)]{MK}.
\end{remark}

The next result extends the main result in \cite{GHK} for $p=2$ to arbitrary $p$. The reference \cite{MB} gives necessary and sufficient conditions for inequalities of this type.

\begin{theorem}\label{t:smp}%5.4
Let $M$ be a compact orientable manifold of dimension $mp$ whose fundamental cohomology class can be expressed as a product of cohomology classes of degree $p$. Then for all Riemannian metrics $g$ on $M$,
\begin{equation*}
 \frac {{\mathrm{stsys}_p(M,g)^m}}{\mathrm{vol}(M,g)} \leq C_{mp,p}^2C_{(m-1)p,p}^2 \cdots C_{2p,p}^2 (\Gamma_b)^m
 \end{equation*}
where $b$ is the $p$-th Betti number of $M$.
\end{theorem}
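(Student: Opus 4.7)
The plan is to model the argument on the proof of Theorem \ref{t:s2}, replacing the two-factor estimate from Proposition \ref{p:w2} by the $m$-factor estimate from Proposition \ref{p:wm}, and replacing the pair $(\mathsf{L}^p,\mathsf{L}^{n-p})$ by $m$ copies of $\mathsf{L}^p$. The dimensional identity $\Vert \phi_1\wedge\cdots\wedge\phi_m\Vert^\ast=|\phi_1\wedge\cdots\wedge\phi_m|^\ast$ on $\mathbf{R}^{mp}$, already used in the proof of Proposition \ref{p:wm}, is precisely what makes the estimate apply to the top-degree integrand here.

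First I would use the hypothesis that the fundamental cohomology class is expressible as a cup product to produce classes $\alpha_1,\ldots,\alpha_m\in H^p_{dR}(M)$ with $\int_M\alpha_1\cup\cdots\cup\alpha_m\neq 0$. Since $H^p_{dR}(M)$ is spanned by classes in $\mathsf{L}^p(M)$ of comass at most $\lambda_b(\mathsf{L}^p(M))$, each $\alpha_i$ can be expanded as a linear combination of such integral classes; multilinearity of the cup product then expands the full product into a finite sum of cup products of integral classes, at least one of which must be nonzero. Being an integral class of top degree whose integral is a nonzero integer, that term satisfies $|\int_M\alpha_1\cup\cdots\cup\alpha_m|\geq 1$, so I may assume from the start that each $\alpha_i\in\mathsf{L}^p(M)$ with $\Vert\alpha_i\Vert^\ast\leq\lambda_b(\mathsf{L}^p(M))$.

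Next I would represent each $\alpha_i$ by a closed $p$-form $\phi_i$ whose comass approximates $\Vert\alpha_i\Vert^\ast$, apply Proposition \ref{p:wm} pointwise to bound $|\phi_1\wedge\cdots\wedge\phi_m|^\ast$ at every point of $M$ by $C_{mp,p}^2 C_{(m-1)p,p}^2\cdots C_{2p,p}^2\,\Vert\phi_1\Vert_\infty^\ast\cdots\Vert\phi_m\Vert_\infty^\ast$, and then integrate over $M$ to obtain
\begin{equation*}
1\leq\left|\int_M\alpha_1\cup\cdots\cup\alpha_m\right|\leq C_{mp,p}^2\cdots C_{2p,p}^2\bigl(\lambda_b(\mathsf{L}^p(M))\bigr)^m\mathrm{vol}(M,g).
\end{equation*}
Finally I would divide by $\mathrm{vol}(M,g)$, multiply by $\mathrm{stsys}_p(M,g)^m=\lambda_1(\mathsf{L}_p(M))^m$, and invoke the definition of $\Gamma_b$ to conclude $\lambda_1(\mathsf{L}_p(M))\lambda_b(\mathsf{L}^p(M))\leq\Gamma_b$, yielding the claimed inequality.

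The only real point requiring care is the first step: one must confirm that although the hypothesis only asserts \emph{some} decomposition of the fundamental class as a product of $p$-classes, one can arrange the factors to lie in the integral lattice with comass controlled by $\lambda_b(\mathsf{L}^p(M))$. The multilinear-expansion trick above handles this routinely, and it is the analogue of the single-factor choice made in the proof of Theorem \ref{t:s2}; beyond this, the argument is a direct iteration of the two-factor case, so I do not expect substantial additional obstacles.
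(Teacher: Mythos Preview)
Your proposal is correct and follows essentially the same argument as the paper's proof: choose integral classes $\alpha_i\in\mathsf{L}^p(M)$ with comass at most $\lambda_b(\mathsf{L}^p(M))$ whose cup product is nonzero, apply Proposition~\ref{p:wm} pointwise to representing forms, integrate, and finish with the definition of $\Gamma_b$. Your explicit multilinear-expansion justification for the reduction to integral classes is in fact more detailed than the paper's, which simply asserts ``Again we may assume'' by analogy with Theorem~\ref{t:s2}.
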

\begin{proof}
By assumption there exist cohomology classes $ \alpha_1, \dots, \alpha_m \in H^p_{dR}(M)$ such that
 $\alpha_1 \cup\cdots\cup \alpha_m \neq 0$. Again we may assume $\alpha_1, \dots ,\alpha_m \in \mathsf{L}^p(M)$ and that 
$\Vert \alpha_i \Vert^\ast \leq \lambda_b(\mathsf{L}^p(M))$ for all $i$.

Let $\phi_1, \dots, \phi_m$ be closed differential $p$-forms representing $\alpha_1, \dots, \alpha_m$. Applying Proposition \ref{p:wm}
pointwise gives 
\begin{equation*}
|\phi_1 \wedge\cdots\wedge \phi_m|^\ast \leq C_{mp,p}^2C_{(m-1)p,p}^2 \cdots C_{2p,p}^2 \Vert \phi_1\Vert_\infty^\ast \cdots \Vert \phi_m\Vert_\infty^\ast.
\end{equation*}
Then
\begin{equation}
1 \leq \left| \int_M \alpha_1\cup\cdots\cup\alpha_m  \right| \leq C_{mp,p}^2C_{(m-1)p,p}^2 \cdots C_{2p,p}^2 \Vert \alpha_1\Vert^\ast \cdots \Vert \alpha_m\Vert^\ast \mathrm{\ vol}(M,g).
\end{equation}
Multiply this inequality by $\mathrm{stsys}_p(M,g)^m = \lambda_1(\mathsf{L}_p(M,g))^m$ and divide by $\mathrm{vol}(M,g)$ to obtain
\begin{eqnarray*}
\frac{ \mathrm{stsys}_p(M,g)^m}{\mathrm{vol}(M,g) }&\leq& C_{mp,p}^2C_{(m-1)p,p}^2 \cdots C_{2p,p}^2 (\lambda_1(\mathsf{L}_p(M)))^m (\lambda_b(\mathsf{L}^p(M))^m\\
&\leq& C_{mp,p}^2C_{(m-1)p,p}^2 \cdots C_{2p,p}^2 (\Gamma_b)^m
\end{eqnarray*}
by the definition of $\Gamma_b$.
\end{proof}

Using the exact values of $C_{n,p}$ from Section 3, and applying Theorems \ref{t:s2}  or \ref{t:smp}, we obtain: 
 \begin{corollary}%5.5
The following statements hold:
 \begin{enumerate}
\item For every Riemannian metric $g$ on a compact orientable 6-dimensional manifold $M$ with third Betti number $b>0$
\begin{equation*} \mathrm{stsys}_3(M,g)^2 \leq 4  (\Gamma_b)^2 \mathrm{vol}(M,g)\end{equation*}

\item For every Riemannian metric $g$ on a compact orientable 7-dimensional manifold $M$ with third Betti number $b>0$
\begin{equation*} \mathrm{stsys}_3(M,g) \mathrm{stsys}_4(M,g) \leq 7  (\Gamma_b)^2 \mathrm{vol}(M,g)\end{equation*}

\item For every Riemannian metric $g$ on a compact orientable 8-dimensional manifold $M$ with fourth Betti number $b>0$
\begin{equation*} \mathrm{stsys}_4(M,g)^2 \leq 14  (\Gamma_b)^2 \mathrm{vol}(M,g)\end{equation*}

\item For every Riemannian metric $g$ on a compact orientable 2m-dimensional manifold $M$ with second Betti number $b>0$
\begin{equation*} \mathrm{stsys}_2(M,g) \mathrm{stsys}_{2m-2}(M,g) \leq m  (\Gamma_b)^2 \mathrm{vol}(M,g)\end{equation*}
\end{enumerate}
\end{corollary}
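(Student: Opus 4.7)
The plan is to observe that each of the four inequalities is a direct specialization of Theorem \ref{t:s2} obtained by substituting the exact values of $C_{n,p}^2$ determined in Section 3 (together with Remark \ref{r:1}(2) for the last item). Thus no new argument is required; the work is entirely in matching parameters and verifying that the hypotheses of Theorem \ref{t:s2} hold in each case.

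For item (1), I would take $n=6$ and $p=3$. Since $b_3(M)>0$, Theorem \ref{t:s2} applies and yields
\begin{equation*}
\frac{\mathrm{stsys}_3(M,g)^2}{\mathrm{vol}(M,g)} \leq C_{6,3}^2\,(\Gamma_b)^2,
\end{equation*}
after which I substitute $C_{6,3}^2=4$ from the first proposition of Section 3. Item (2) follows identically with $n=7$, $p=3$, using Poincar\'e duality to note $b_4=b_3>0$ and substituting $C_{7,3}^2=7$ from the third proposition of Section 3. Item (3) is the case $n=8$, $p=4$, with the value $C_{8,4}^2=14$ from the second proposition. Item (4) is the case $n=2m$, $p=2$, where Poincar\'e duality gives $b_{2m-2}=b_2>0$; here $C_{2m,2}^2 = \lfloor 2m/2\rfloor = m$ by Remark \ref{r:1}(2).

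There is essentially no obstacle: in every case Theorem \ref{t:s2} supplies the inequality and Section 3 (or Remark \ref{r:1}) supplies the sharp constant. The only point worth mentioning explicitly is the verification, via Poincar\'e duality, that the complementary Betti numbers in items (2) and (4) are positive, which is needed so that both $\mathrm{stsys}_p$ and $\mathrm{stsys}_{n-p}$ are finite and the class $\beta\in H^{n-p}_{dR}(M)$ constructed in the proof of Theorem \ref{t:s2} exists.
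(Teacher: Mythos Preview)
Your proposal is correct and matches the paper's own argument: the corollary is obtained directly by plugging the exact values $C_{6,3}^2=4$, $C_{7,3}^2=7$, $C_{8,4}^2=14$, and $C_{2m,2}^2=m$ into Theorem~\ref{t:s2}. Your explicit mention of Poincar\'e duality for the complementary Betti numbers is a helpful clarification but is already implicit in the hypothesis and proof of Theorem~\ref{t:s2}.
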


Equality holds in (4) when $M=\mathbf{CP}^m$ with the standard metric, because in this case $b=1$ and thus $\Gamma_1=1$.  Recall Gromov's stable systolic inequality for $\mathbf{CP}^m$ \cite[Theorem 4.36]{MG}, which asserts that for any Riemannian metric $g$ on  $\mathbf{CP}^m$,
$$ \mathrm{stsys}_2(\mathbf{CP}^m,g) ^m \leq m! \mathrm{vol}(\mathbf{CP}^m,g) $$
with equality holding when $g$ is the standard Fubini--Study metric.  Then, if $g$ is the Fubini--Study metric, $\mathrm{stsys}_{2m-2}(\mathbf{CP}^m,g) = \mathrm{stsys}_2(\mathbf{CP}^m,g)^{m-1}/(m-1)!$, and equality holds in (4).

\end{document}